\documentclass[a4paper]{article}
\usepackage{amsmath}
\usepackage{amssymb}
\usepackage{amsthm}
\usepackage{dsfont}
\usepackage[notref,notcite]{showkeys}
 \newtheorem{thm}{Theorem}
 \newtheorem{prop}{Proposition}
 \newtheorem{remark}{Remark}
 \newtheorem{lemma}{Lemma}
 \newcommand{\bbr}{{\mathbb R}}
\newcommand{\bbs}{{\mathbb S}}

\usepackage{authblk}



\def\be{\begin{equation}}
\def\ee{\end{equation}}


%

\begin{document}
\title{The spatially homogeneous Boltzmann equation for massless particles in an FLRW background}

\author{Ho Lee\footnote{holee@khu.ac.kr}}

\affil{Department of Mathematics and Research Institute for Basic Science, Kyung Hee University, Seoul, 02447, Republic of Korea}

\maketitle

\begin{abstract}
We study the spatially homogeneous relativistic Boltzmann equation for massless particles in an FLRW background with scattering kernels in a certain range of soft and hard potentials. We obtain the future global existence of small solutions in a weighted $L^1\cap L^\infty$ space. 
\end{abstract}

%
%

\section{Introduction}
The relativistic Boltzmann equation describes the time evolution of the distribution function for fast-moving particles undergoing binary collisions. It was first considered in general relativity \cite{B73,BCB,B67}, but detailed analysis of the collision operator was initiated in special relativity \cite{DEJ,DEJ92,GS,GS95}. There have been many works on the relativistic Boltzmann equation \cite{A96,ACI,G06,GS12,JY,LR,S10,SY,SZ}, but still many open problems remain to be investigated. For basic information about the relativistic Boltzmann equation we refer to \cite{CK,G}. 

In this paper, we are interested in the relativistic Boltzmann equation, but it will be studied in an FLRW\footnote{Friedmann-Lema{\^i}tre-Robertson-Walker} background. To be consistent with the FLRW geometry, we assume that the distribution function is also spatially homogeneous. There have been only a few results concerning the spatially homogeneous relativistic Boltzmann equation in an FLRW spacetime. Global existence of small solutions was obtained in \cite{L13}, but a certain restriction was imposed on the angular part of the scattering kernel. The restriction was removed in \cite{LN1}, but the argument applies only to the case of the scattering kernel for Israel particles. These results have been extended to the Bianchi cases \cite{LN,LN2,LN3}, and we also refer to \cite{ND06,NDT05,NT06} for a different approach. The purpose of this paper is to study the global existence of small solutions to the spatially homogeneous relativistic Boltzmann equation in an FLRW background, and the results of the paper will be an improvement of \cite{L13,LN1}, in the sense that the unphysical restriction of \cite{L13} will be removed, and the global existence will be proved for a wider class of scattering kernels. 

On the other hand, we will consider the Boltzmann equation for massless particles, which differs from the massive cases of \cite{L13,LN1}. In this paper, the spatially homogeneous relativistic Boltzmann equation for massless particles will be referred to as the massless Boltzmann equation, for simplicity. The massless Boltzmann equation has recently been studied in \cite{baz1,baz2}, where an analytic solution has been found, and in \cite{LNT}, where a local existence was obtained. The main interest in \cite{LNT} was to study the isotropic singularity problem, for which one needs to establish a well-posed Cauchy problem with data at $t=0$, where the initial singularity is located (see \cite{A,AT99a,AT99,T03,T} for more details). In this paper, we also study the Cauchy problem for the massless Boltzmann equation, but $(a)$ data will be given at a finite time after the initial singularity, say $t=t_0>0$, $(b)$ we will obtain the global existence, and $(c)$ the scattering kernel in this paper will differ from the one in \cite{LNT}, which was the type of soft potentials:
\[
\sigma(h,\omega) = h^{-b}\quad (1<b<2),
\]
but in this paper it will be assumed to be of the following type:
\begin{align}\label{scattering}
\sigma(h,\omega) = 
\left\{
\begin{aligned}
&h^{-b}\quad (0<b<1),\\
&h^a\quad (0\leq a<2),
\end{aligned}
\right.
\end{align}
which covers a wide range of soft and hard potentials. Unfortunately, we were not able to obtain the result for $1<b<2$, which could have lead to a global existence result with data at the initial singularity. We note that the distribution function in \cite{LNT} was assumed to be spatially homogeneous and isotropic, i.e., $f = f(t,|p|)$, but the isotropy assumption will be removed in this paper so that we only have the spatial homogeneity on the distribution function. Hence, we expect that the results of this paper can be extended to the Bianchi cases. We also expect that the idea of this paper can be used to extend \cite{LNT} to the Bianchi cases. 

The strategy of this paper is as follows. We first consider the arguments of \cite{LR}, where the global existence for general initial data was obtained in the massive case. Applying \cite{LR} to the massless case, we encounter a singularity in the collision operator (see \eqref{p^0} and \eqref{Bsoft}--\eqref{Bhard}), but it will be shown that the singularity can be controlled by using the singular weights $|p|^r$ and $|p|e^{|p|}$ (see \eqref{norm1}--\eqref{norm2}). In the case of soft potentials, we estimate the $L^1_{-2}$ norm to obtain the existence in $L^1_{-1}$. The $L^1_{-2}$ norm will be estimated by using the $L^\infty_w$ norm, and the boundedness of the $L^\infty_w$ norm will be obtained by assuming small initial data and using the expansion of the universe. In this paper, the scale factor in the FLRW metric (see \eqref{metric}) will be assumed to be given by
\begin{align}\label{R}
R = C (t+t_0)^{\frac12},
\end{align}
for some constant $C>0$, so that initial data will be given at $t=0$, and the initial singularity will be located at $t=-t_0$ (see page 4 of \cite{LNT} for more details). Similar arguments will be given in the case of hard potentials, and the following are the main results of this paper. 

\begin{thm}\label{thms}
Let $f_0$ be an initial data of the massless Boltzmann equation \eqref{Bsoft} satisfying $0\leq f_0\in L_{-2}^1(\bbr^3)\cap L^\infty_w(\bbr^3)$. Then, there exists $\varepsilon>0$ such that for any $\|f_0\|_{L^\infty_w}<\varepsilon$, the massless Boltzmann equation has a unique non-negative solution $f\in C^1([0,\infty);L^1(\bbr^3)\cap L^1_{-1}(\bbr^3))$ satisfying
\begin{align*}
&\sup_{0\leq t<\infty} \|f(t)\|_{L^\infty_w}\leq C\varepsilon.
\end{align*}
\end{thm}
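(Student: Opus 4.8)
The plan is to construct the solution as the limit of the monotone iteration associated with the mild (Duhamel) form of the massless Boltzmann equation \eqref{Bsoft}, written in spatial homogeneity, and to close the global-in-time estimates by a bootstrap on the weighted sup-norm $\|\cdot\|_{L^\infty_w}$ that trades the smallness of the data against the decay produced by the scale factor \eqref{R}. Writing the collision operator as $Q=Q_+-Q_-$ with $Q_-(f,f)=f\,\nu[f]$ for the nonnegative collision frequency $\nu[f]$, the equation becomes $\partial_t f+\nu[f]\,f=Q_+(f,f)$ and integrates to
\[
f(t,p) = f_0(p)\exp\!\Big(-\!\int_0^t \nu[f](s,p)\,ds\Big) + \int_0^t \exp\!\Big(-\!\int_s^t \nu[f](\tau,p)\,d\tau\Big)\, Q_+(f,f)(s,p)\,ds.
\]
Since $Q_+$ is order preserving and the exponential factor is bounded by $1$, the iteration $f^{(0)}=f_0$, $f^{(n+1)}=\mathcal{T}[f^{(n)}]$ produces a nonnegative monotone sequence in $n$; the whole task is to obtain bounds uniform in $n$ and in $t$ in the right spaces, pass to the limit, and get uniqueness by running the same estimate on differences (a Gronwall/contraction argument in $L^1$ or $L^1_{-1}$), following the scheme of \cite{LR}.

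\textbf{The $L^\infty_w$ bootstrap.} This is where the massless problem genuinely departs from the massive case of \cite{LR}: because $p^0=|p|$, the factors $1/p^0$ in $Q_\pm$ and the soft factor $h^{-b}$ with $0<b<1$ from \eqref{scattering} are singular as the momenta approach $0$. I would multiply the Duhamel identity by $w(p)=|p|e^{|p|}$ and estimate $w(p)\,Q_+(f,f)(s,p)$ pointwise: the factor $|p|$ kills the $1/p^0$ singularity at the origin, the exponential $e^{|p|}$ absorbs the loss of decay in the post-collisional change of variables at large momenta, and the time-dependent constants coming from $R(s)$ are collected into a factor $\Phi(s)$, giving a bound of the form $w(p)\,Q_+(f,f)(s,p)\le \Phi(s)\,\|f(s)\|_{L^\infty_w}^2$. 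By \eqref{R} one has $R(s)=C(s+t_0)^{1/2}$, so $\Phi$ is integrable on $[0,\infty)$, and the Duhamel inequality yields $\|f(t)\|_{L^\infty_w}\le \|f_0\|_{L^\infty_w} + C_\ast\big(\sup_{s\ge0}\|f(s)\|_{L^\infty_w}\big)^2$. Inserting the bootstrap hypothesis $\sup_s\|f(s)\|_{L^\infty_w}\le C\varepsilon$ and choosing $\varepsilon$ so small that $C\varepsilon\le\varepsilon+C_\ast C^2\varepsilon^2$ holds strictly closes the loop and gives $\sup_{0\le t<\infty}\|f(t)\|_{L^\infty_w}\le C\varepsilon$; the same computation on $f^{(n+1)}-f^{(n)}$ shows the iterates form a Cauchy sequence.

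\textbf{The weighted $L^1$ theory and regularity.} With the $L^\infty_w$ bound available, I would control the singular weighted $L^1$ norms. Estimating $\tfrac{d}{dt}\|f(t)\|_{L^1_{-2}}$, the dangerous region $|p|\to0$ in $Q_\pm$ is handled by substituting there the pointwise bound $f(s,p)\lesssim \varepsilon\,|p|^{-1}e^{-|p|}$ coming from $L^\infty_w$, which leaves an inequality $\|f(t)\|_{L^1_{-2}}\le\|f_0\|_{L^1_{-2}}+\int_0^t\Psi(s)\,\|f(s)\|_{L^1_{-2}}\,ds$ with $\Psi$ again integrable by \eqref{R}, so $\|f(t)\|_{L^1_{-2}}$ stays bounded on $[0,\infty)$ by Gronwall. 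An $L^1$ bound follows from integrating the equation (conservation or controlled growth of the particle number in the expanding background), and the elementary interpolation $\int|p|^{-1}f\le(\int|p|^{-2}f)^{1/2}(\int f)^{1/2}$ then bounds $\|f(t)\|_{L^1_{-1}}$, which is the space in which the solution is claimed. Passing to the limit in the iteration is justified by monotone convergence together with these uniform bounds; finally, once $f\in L^1\cap L^1_{-1}$ uniformly and $Q(f,f)$ is shown to be continuous on this space, the equation in its homogeneous form $\partial_t f=Q(f,f)$ upgrades $f$ to $C^1([0,\infty);L^1(\bbr^3)\cap L^1_{-1}(\bbr^3))$, and uniqueness in this class comes from the Gronwall estimate on two solutions.

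\textbf{Main obstacle.} I expect the crux to be the second step: producing the pointwise gain-term estimate $w\,Q_+\le\Phi(s)\,\|f\|_{L^\infty_w}^2$ with $\Phi\in L^1([0,\infty))$ for the \emph{entire} soft range $0<b<1$. This is precisely where the massless singularity $1/p^0=1/|p|$, the soft singularity $h^{-b}$, the choice of the weight $|p|e^{|p|}$, and the $t^{1/2}$ growth of the scale factor \eqref{R} all have to be balanced simultaneously — a careful splitting of the collision integral into small- and large-momentum zones and a change of variables on the collision sphere will be needed — and it is presumably also the reason the borderline regime $1<b<2$, which would reach the initial singularity, cannot be treated by this method.
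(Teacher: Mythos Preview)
Your core analytic insight matches the paper's: the $L^\infty_w$ bootstrap driven by the integrability of $R^{-3+b}$ on $[0,\infty)$, followed by using the pointwise bound $f(p)\le C\varepsilon\,|p|^{-1}e^{-|p|}$ to close the $L^1_{-2}$ estimate via Gr\"onwall. The ``main obstacle'' you flag is exactly the computation the paper carries out: after using $|p'|+|q'|=|p|+|q|$ to convert $e^{|p|}e^{-|p'|}e^{-|q'|}=e^{-|q|}$, one evaluates
\[
\int_{\bbs^2}\frac{d\omega}{|p'||q'|}=\frac{16\pi}{\nu\,|n|}\ln\frac{\nu+|n|}{\varrho}\le \frac{C}{\varrho^{\delta}\nu^{2-\delta}}
\]
for any $\delta>0$ (the same device as in Lemma~\ref{Pov1}), and the choice $\delta=2-b$ eliminates the $p$-dependence from the right-hand side, leaving $\int_{\bbr^3}|q|^{-1-b}e^{-|q|}\,dq<\infty$ for $0<b<1$. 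This gives $\partial_t\|f\|_{L^\infty_w}\le CR^{-3+b}\|f\|_{L^\infty_w}^2$ directly---no small/large-momentum splitting is needed.

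Where you diverge from the paper is in the construction. The paper does not iterate the Duhamel formula; instead it regularizes by inserting a cutoff $\mathds{1}_{\{\varrho\ge k^{-1}\}}$ in the kernel, obtains global solutions $f_k$ of the bounded-kernel problem with $k$-uniform bounds in $L^\infty_w$ and $L^1_{-2}$ (Proposition~\ref{props}), and then shows the $f_k$ are Cauchy in $L^1\cap L^1_{-1}$ by estimating $\tfrac{d}{dt}\|f_m-f_n\|_{L^1_r}$ for $r=0,-1$. Your monotone-iteration claim is not correct as stated: the map $\mathcal{T}$ is not order preserving, because the factor $\exp\bigl(-\!\int\nu[g]\bigr)$ is \emph{anti}-monotone in $g$, so $f^{(n)}\le f^{(n+1)}$ does not imply $\mathcal{T}[f^{(n)}]\le\mathcal{T}[f^{(n+1)}]$ (this is precisely why the Kaniel--Shinbrot scheme uses two coupled sequences rather than one). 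Your fallback---showing the iterates are Cauchy via the same quadratic estimate---would work and is close in spirit to the paper's cutoff-removal step; the paper's route has the practical advantage that existence, non-negativity, and $L^1$-conservation for each $f_k$ are immediate from the bounded kernel, so the delicate weighted estimates are performed on genuine solutions rather than on formal iterates.
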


\begin{thm}\label{thmh}
Let $f_0$ be an initial data of the massless Boltzmann equation \eqref{Bhard} satisfying $0\leq f_0\in L^\infty_w(\bbr^3)$. Then, there exists $\varepsilon>0$ such that for any $\|f_0\|_{L^\infty_w}<\varepsilon$, the massless Boltzmann equation has a unique non-negative solution $f\in C^1([0,\infty);L^1(\bbr^3)\cap L^1_1(\bbr^3))$ satisfying
\[
\sup_{0\leq t<\infty} \|f(t)\|_{L^\infty_w}\leq C\varepsilon. 
\]
\end{thm}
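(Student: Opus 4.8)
The plan is to set up a contraction/iteration scheme in the space $X = \{f \in C([0,\infty); L^1(\bbr^3)\cap L^1_1(\bbr^3)) : \sup_t \|f(t)\|_{L^\infty_w} \leq C\varepsilon\}$, using the mild (Duhamel) formulation of the massless Boltzmann equation \eqref{Bhard} along characteristics. Writing the equation in the form $\partial_t f + (\text{transport in }p) = Q^+(f,f) - f\,L(f)$, where $L(f)$ is the loss frequency, I would integrate along characteristics to obtain $f(t,p)$ as the initial datum $f_0$ transported forward plus a time integral of $Q^+$, with an exponential loss factor $\exp(-\int L)$. The scale factor \eqref{R}, $R = C(t+t_0)^{1/2}$, provides decay in the momentum variables along characteristics; this is the mechanism by which the expansion of the universe is exploited.

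The key steps, in order: (i) record the effect of the transport term — momenta decay like $R(t)^{-1}\sim (t+t_0)^{-1/2}$ along characteristics, so weights of the form $|p|e^{|p|}$ improve in time; (ii) bound the gain term $Q^+(f,f)$ in the $L^\infty_w$ norm by $C\|f\|_{L^\infty_w}^2$ times an integrable-in-time factor coming from the scale-factor decay and the hard-potential kernel $\sigma = h^a$ with $0\le a<2$ — here one must check that the singular factor $1/p^0$ from the massless dispersion relation \eqref{p^0} (i.e.\ $p^0 = |p|$) is absorbed by the weight $|p|e^{|p|}$, which is precisely the point flagged in the strategy paragraph; (iii) bound the loss term from below (nonnegativity of $\exp(-\int L)\le 1$ suffices for the a priori bound, but a lower bound on $L$ is not needed for hard potentials); (iv) close the a priori estimate: if $\|f_0\|_{L^\infty_w}<\varepsilon$ and $\sup_t\|f\|_{L^\infty_w}\le C\varepsilon$, then the Duhamel formula gives $\sup_t\|f\|_{L^\infty_w}\le \varepsilon + C'(C\varepsilon)^2 \le C\varepsilon$ for $\varepsilon$ small; (v) run the same estimates on differences $f-g$ to get a contraction, yielding existence and uniqueness in $X$; (vi) propagate the $L^1$ and $L^1_1$ regularity and upgrade to $C^1$ in time by differentiating the mild formulation, using that the right-hand side is continuous in $t$ with values in $L^1\cap L^1_1$.

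The main obstacle I expect is step (ii): controlling the gain operator $Q^+$ in the weighted $L^\infty$ norm when the kernel \eqref{Bhard} contains both the massless singularity $1/|p|$ (through $p^0=|p|$ and through the Møller-type velocity factor, which can degenerate when the relative momentum $h$ is small) and the hard-potential growth $h^a$ (which can be large when $h$ is large). The weight $|p|e^{|p|}$ must simultaneously kill the small-$|p|$ singularity and tame the large-momentum growth of $h^a$, and one needs the time integral $\int_0^\infty R(t)^{?}\,dt$ of the resulting scale-factor powers to converge — this is where the restriction $a<2$ and the precise exponent $1/2$ in \eqref{R} enter. If the integrability is borderline, one may need to split the collision integral into a small-$h$ region (handled by the soft bound $h^{-b}$-type estimates with $|p|^r$ weights as in the companion Theorem \ref{thms}) and a large-$h$ region (handled by the exponential weight), and then sum. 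Once this weighted-$L^\infty$ bilinear estimate is in hand, the remaining steps are standard fixed-point and bootstrap arguments.
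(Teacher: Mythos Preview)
Your step (i) rests on a misconception: in the covariant momentum variables $p=(p_1,p_2,p_3)$ that the paper uses, the spatially homogeneous equation \eqref{Bhard} has \emph{no} transport term in $p$ --- it reads simply $\partial_t f = R^{-3-a}\times(\text{collision integral})$. Characteristics are $p=\text{const}$, momenta do not decay, and the weight $w=|p|e^{|p|}$ does not ``improve along them.'' The entire time-decay mechanism is the explicit prefactor $R^{-3-a}=C(t+t_0)^{-(3+a)/2}$, which is integrable on $[0,\infty)$ for every $a\geq 0$. So step (i) should be deleted, not repaired; if you actually tried to implement it you would be computing the wrong thing.

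Your step (ii) is the heart of the matter and is correct in outline: one needs $wQ^+(f,f)(p)\leq CR^{-3-a}\|f\|_{L^\infty_w}^2$ uniformly in $p$. The paper (Proposition~\ref{proph}) obtains this by using energy conservation $|p'|+|q'|=|p|+|q|$ to convert $e^{|p|}e^{-|p'|}e^{-|q'|}$ into $e^{-|q|}$, then bounding $\int_{\bbs^2}\frac{d\omega}{|p'||q'|}\leq C\varrho^{-\delta}\nu^{-(2-\delta)}$ for any $\delta>0$ (with $\nu=|p|+|q|$), and finally choosing $0<\delta<2-a$ together with Young's inequality on $\nu$ to eliminate the residual $p$-dependence. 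Your small-$h$/large-$h$ splitting is unnecessary and misdiagnoses the difficulty: the kernel $\varrho^{2+a}$ vanishes at $\varrho=0$, so there is no small-$h$ singularity in the hard case; the only singularity is $1/|p|$, absorbed by the factor $|p|$ in $w$, and the growth $\varrho^{2+a}$ is handled by $e^{-|q|}$ and the $\nu$-estimate above.

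Once step (ii) is established, your contraction route is a legitimate alternative to the paper's argument, which instead builds solutions $f_k$ to a cut-off equation (restricting to $\varrho\leq k$), proves the $L^\infty_w$ bound uniformly in $k$, and then shows $\{f_k\}$ is Cauchy in $L^1\cap L^1_1$ by estimating $\frac{d}{dt}(\|f_m-f_n\|_{L^1}+\|f_m-f_n\|_{L^1_1})$. Both routes ultimately rest on the same bilinear $L^\infty_w$ estimate and the integrability of $R^{-3-a}$; the paper's cut-off-and-limit route has the advantage of delivering nonnegativity of the solution directly, which you would need to argue separately in a fixed-point scheme.
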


The plan of this paper is as follows. In Section 2, we introduce the massless Boltzmann equation in an FLRW background and collect some basic lemmas. In Section 3, we prove the theorems. In the case of soft potentials, we estimate the $L^1_{-2}$ norm in Proposition \ref{props}, where the estimate of the $L^\infty_w$ norm will be crucially used, and prove the global existence in $L^1(\bbr^3)\cap L^1_{-1}(\bbr^3)$ in Section 3.1. In the case of hard potentials, we need to estimate the $L^1_2$ norm, but the estimate of the $L^\infty_w$ norm will be enough to obtain the global existence in $L^1(\bbr^3)\cap L^1_1(\bbr^3)$. This will be given in Proposition \ref{proph} and Section 3.2.

%
%

\section{Preliminaries}
\subsection{Boltzmann equation}
The Boltzmann equation describes the time evolution of the distribution function $f=f(t,x,p)$, which is the density function in the phase space. Let $p^\alpha$ be the four-momentum of a particle with rest mass $m\geq 0$. By the mass shell condition:
\[
p_\alpha p^\alpha = -m^2, 
\]
we have $p^0$ as a function of $p$. In the Minkowski case we have
\[
p^0 = \sqrt{m^2 + |p|^2}.
\]
We consider only binary collisions and assume that the total energy and momentum is conserved. Let $p^\alpha$, $q^\alpha$, $p'^\alpha$, and $q'^\alpha$ denote the pre-collision and the post-collision momenta of two colliding particles. Then, we have
\[
p'^\alpha+ q'^\alpha = p^\alpha + q^\alpha.
\]
The relative momentum $h$ and the total energy $s$ are defined by
\begin{align}
h&=\sqrt{(p_\alpha - q_\alpha)(p^\alpha - q^\alpha)}=\sqrt{-2m^2 - 2p_\alpha q^\alpha},\label{h}\\
s&=-(p_\alpha +q_\alpha)(p^\alpha + q^\alpha) = 2m^2 -2p_\alpha q^\alpha,\label{s}
\end{align}
and the energy-momentum conservation shows that they are collisional invariants. In the Minkowski case the Boltzmann equation is written as follows:
\[
\partial_t f + \frac{p}{p^0}\cdot\nabla_x f = \int_{\bbr^3}\int_{\bbs^2}\frac{h\sqrt{s}}{p^0 q^0}\sigma(h,\omega)(f(p') f(q') - f(p) f(q)) d\omega dq.
\]
The quantity $\sigma$ is the scattering kernel, and the post-collision momentum $p'^\alpha$ and $q'^\alpha$ can be parametrized by
\begin{align}
p'^0 & = \frac{p^0+q^0}{2}+\frac{h(p+q)\cdot\omega}{2\sqrt{s}},\label{p'^0_ortho} \\
q'^0 & = \frac{p^0+q^0}{2}-\frac{h(p+q)\cdot\omega}{2\sqrt{s}},\label{q'^0_ortho} 
\end{align}
and
\begin{align}
p' &  = \frac{p+q}{2}+\frac{h}{2}\left(\omega +\frac{((p+q)\cdot\omega)(p+q)}{\sqrt{s}(p^0+q^0+\sqrt{s})}\right),\label{p'_ortho}\\
q' &  = \frac{p+q}{2}-\frac{h}{2}\left(\omega +\frac{((p+q)\cdot\omega)(p+q)}{\sqrt{s}(p^0+q^0+\sqrt{s})}\right).\label{q'_ortho}
\end{align}
Several different ways to parametrize the post-collision momentum are known \cite{GS,LN,LN1}, but the expressions \eqref{p'^0_ortho}--\eqref{q'_ortho} are the ones of \cite{S10,SY}. We refer to \cite{CK} for more details about the relativistic Boltzmann equation. 

In this paper we are interested in the FLRW spacetime. We will assume that the metric is given by
\begin{align}\label{metric}
g = -dt^2 + R^2((dx^1)^2 + (dx^2)^2 + (dx^3)^2)
\end{align}
and the scale factor $R$ satisfies the assumption \eqref{R}. Indices are now raised and lowered via the metric $g$ so that we have
\[
p^0 = - p_0,\quad p_i = R^2 p^i,\quad i = 1,2,3.
\]
In this paper, we will consider the spatially homogeneous case, and the distribution function will be assumed to be a function of $t$ and $p_i$. We will write
\[
p = (p_1, p_2, p_3)
\]
so that the distribution function can be written as
\[
f = f(t,p).
\]
Let us define\footnote{The notations should not be confused with the ones in \cite{LNT}, where ${\bf p}$ was used to denote a three dimensional vector, and the modulus of ${\bf p}$ was denoted by $p$.}
\[
|p | :=\sqrt{\sum_{i=1}^3(p_i)^2}.
\]
Then, we obtain from the mass shell condition
\[
p^0 = \sqrt{m^2 + R^{-2} |p|^2}. 
\]
The Boltzmann equation in an FLRW background is now written as follows:
\begin{align}\label{B}
\partial_t f = R^{-3}\int_{\bbr^3}\int_{\bbs^2}\frac{h\sqrt{s}}{p^0 q^0}\sigma(h,\omega)(f(p') f(q') - f(p) f(q)) d\omega dq,
\end{align}
where $R^{-3}$ corresponds to $(-\det g)^{-1/2}$, $dq = dq_1 dq_2 dq_3$, and $\omega = (\omega_1,\omega_2,\omega_3)$ is a unit vector such that $|\omega|=1$. The quantities \eqref{h} and \eqref{s} are now given by
\begin{align*}
h&=\sqrt{-2m^2 - 2p_\alpha q^\alpha} = \sqrt{-2m^2 +2 p^0 q^0-2R^{-2} (p\cdot q)},\\
s&= 2m^2 -2p_\alpha q^\alpha = 2m^2 + 2p^0 q^0 - 2R^{-2} (p\cdot q),
\end{align*}
where
\[
p\cdot q := p_1 q_1 + p_2 	q_2 + p_3 q_3.
\]
The expressions of post-collision momentum can be obtained by considering an orthonormal frame. For instance, we may choose $e^0 = dt(=dx^0)$ and $e^i = Rdx^i$ to obtain \eqref{p'^0_ortho}--\eqref{q'_ortho} with respect to $\{e^\alpha\}$. Hence, we obtain with respect to $\{dx^\alpha\}$ the following:
\begin{align*}
p'^0 & = \frac{p^0 + q^0}{2} +\frac{h(p+q)\cdot\omega}{2R\sqrt{s}},\\
q'^0 & = \frac{p^0 + q^0}{2} -\frac{h(p+q)\cdot\omega}{2R\sqrt{s}},
\end{align*}
and
\begin{align*}
p' & = \frac{p + q}{2} + \frac{h}{2}\left(R\omega + \frac{((p +q)\cdot\omega)(p + q) }{R\sqrt{s}(p^0 + q^0 + \sqrt{s})}\right),\\
q' & = \frac{p + q}{2} - \frac{h}{2}\left(R\omega + \frac{((p +q)\cdot\omega)(p + q) }{R\sqrt{s}(p^0 + q^0 + \sqrt{s})}\right).
\end{align*}
We note that the above expressions are the same with the expression (2.5) of \cite{L13}, where the Boltzmann equation was studied for massive particles in a given FLRW spacetime.

In this paper we will consider massless particles:
\[
m=0.
\]
By the mass shell condition we have
\begin{align}
p^0 & = R^{-1}|p|.\label{p^0}
\end{align}
Let us define
\begin{align}\label{varrho}
\varrho := \sqrt{2(|p||q|- p\cdot q)},
\end{align}
then we obtain
\begin{align}
h = \sqrt{s} = R^{-1}\varrho.\label{hvarrho}
\end{align}
Now, applying the assumption on the scattering kernel \eqref{scattering} to the equation \eqref{B} together with \eqref{p^0}--\eqref{hvarrho}, we obtain the following:
\begin{align}\label{Bsoft}
\partial_t f = R^{-3+b}\int_{\bbr^3}\int_{\bbs^2}\frac{\varrho^{2-b}}{|p||q|}(f(p') f(q') - f(p) f(q)) d\omega dq
\end{align}
in the soft potential case, and
\begin{align}\label{Bhard}
\partial_t f = R^{-3-a}\int_{\bbr^3}\int_{\bbs^2}\frac{\varrho^{2+a}}{|p||q|}(f(p') f(q') - f(p) f(q)) d\omega dq
\end{align}
in the hard potential case, where $0<b<1$ and $0\leq a<2$. The post-collision momentum can be written as
\begin{align}
|p'| & = \frac{|p| + |q|}{2} +\frac{(p+q)\cdot\omega}{2},\label{p'^0}\\
|q'| & = \frac{|p| + |q|}{2} -\frac{(p+q)\cdot\omega}{2},\label{q'^0}
\end{align}
and
\begin{align}
p' & = \frac{p + q}{2} + \frac{\varrho}{2}\left(\omega + \frac{((p +q)\cdot\omega)(p + q) }{\varrho(|p| + |q| + \varrho)}\right),\label{p'}\\
q' & = \frac{p + q}{2} - \frac{\varrho}{2}\left(\omega + \frac{((p +q)\cdot\omega)(p + q) }{\varrho(|p|+|q|+\varrho)}\right).\label{q'}
\end{align}
Moreover, by the energy conservation we have
\begin{align}\label{energy}
|p'|+|q'| = |p| + |q|,
\end{align}
and the change of variables between $(p,q)$ and $(p',q')$ is given by
\begin{align}\label{dpdq}
\frac{1}{|p||q|}dpdq = \frac{1}{|p'||q'|}dp'dq'.
\end{align}
In this paper the massless Boltzmann equation will refer to the equation \eqref{Bsoft} or \eqref{Bhard}. Note that the equation \eqref{Bsoft} is the same with the equation (21) of \cite{LNT}, where a different time coordinate was used so that the factor $R^{-3+b}$ does not appear. In the present paper we will make use of the integrability of $R^{-3+b}$ or $R^{-3-a}$, so we do not need to redefine the time coordinate. 

\subsection{Basic lemmas}
In this part we collect basic lemmas. They are almost the same with the lemmas in \cite{LNT}, but we present them for the reader's convenience. 

\begin{lemma}\label{basic}
The quantity $\varrho$ defined by \eqref{varrho} satisfies the following:
\begin{align*}
\varrho^2  &= 4|p||q|\sin^2\frac{\varphi}{2},\\
\varrho^2 & \leq 4\min\{|p| |q|, |p'||q'|\},
\end{align*}
where $\varphi$ is the angle between the three-dimensional vectors ${{p}}$ and ${{q}}$.
\end{lemma}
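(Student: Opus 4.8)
The plan is to treat the two claims separately: the first is a direct substitution, and the second follows from the first together with a conservation property.

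For the identity $\varrho^2 = 4|p||q|\sin^2\frac{\varphi}{2}$, I would substitute $p\cdot q = |p||q|\cos\varphi$ into the definition \eqref{varrho} and use the half-angle formula $1-\cos\varphi = 2\sin^2(\varphi/2)$:
\[
\varrho^2 = 2\big(|p||q| - p\cdot q\big) = 2|p||q|\,(1-\cos\varphi) = 4|p||q|\sin^2\frac{\varphi}{2}.
\]
For the estimate $\varrho^2 \leq 4\min\{|p||q|,|p'||q'|\}$, the bound $\varrho^2 \leq 4|p||q|$ is immediate from the identity just proved since $\sin^2(\varphi/2)\leq 1$. The bound $\varrho^2 \leq 4|p'||q'|$ follows in the same way once one knows $2(|p'||q'| - p'\cdot q') = \varrho^2$: then the first part of the lemma applied to $p',q'$ gives $\varrho^2 = 4|p'||q'|\sin^2(\varphi'/2)\leq 4|p'||q'|$, where $\varphi'$ is the angle between $p'$ and $q'$. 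The identity $2(|p'||q'| - p'\cdot q') = \varrho^2$ is exactly the collisional invariance of the relative momentum: by \eqref{hvarrho} we have $\varrho = Rh$, and $h$ was noted after \eqref{h}--\eqref{s} to be a collisional invariant. If one prefers a self-contained check, it can be done directly from the explicit parametrization \eqref{p'^0}--\eqref{q'}: writing $E := |p|+|q|$ and $c := (p+q)\cdot\omega$, formulas \eqref{p'^0}--\eqref{q'^0} give $|p'||q'| = (E^2-c^2)/4$, while setting $p' = \frac{p+q}{2}+v$, $q' = \frac{p+q}{2}-v$ gives $p'\cdot q' = \frac14|p+q|^2 - |v|^2$ with $|v|^2 = \frac14(\varrho^2+c^2)$; substituting and using $|p+q|^2+\varrho^2 = E^2$ together with $E^2 - |p|^2 - |q|^2 = 2|p||q|$ yields $2(|p'||q'| - p'\cdot q') = \varrho^2$.

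I do not expect a genuine obstacle here; the lemma is a bundle of elementary identities whose content is essentially the spherical law of cosines and energy--momentum conservation. The only place needing mild care is the simplification of $|v|^2$ in the direct verification, where one collapses the bracketed vector in \eqref{p'} using the algebraic identity $2\varrho(E+\varrho) + |p+q|^2 = (E+\varrho)^2$; everything else is routine.
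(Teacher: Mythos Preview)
Your proof is correct and follows essentially the same approach as the paper: the first identity is obtained by the same half-angle substitution, and the inequality is deduced from the collisional invariance of $\varrho$. The only difference is that the paper is terser---it states the inequality is ``clear'' from the invariance---whereas you spell out the intermediate step and add an (optional, correct) direct verification from the parametrization.
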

\begin{proof}
By the definition \eqref{varrho} we have
\begin{align*}
\varrho^2  = 2(|p|| q| - {{p}}\cdot{{q}}) = 2|p||q|(1-\cos\varphi) = 4 |p||q| \sin^2\frac{\varphi}{2}.
\end{align*}
The inequality is clear by the fact that $\varrho$ is a collisional invariant. 
\end{proof}

\begin{lemma}\label{Pov1}
The post-collision momenta satisfy for any $\delta>0$ the following:
\[
\int_{\bbs^2}\frac{1}{|p'|}d\omega = \int_{\bbs^2}\frac{1}{|q'|}d\omega \leq \frac{C}{\varrho^\delta (|p|+|q|)^{1-\delta}},
\]
where $C$ is a positive constant depending on $\delta$. 
\end{lemma}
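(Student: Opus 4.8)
The plan is to use the parametrization \eqref{p'^0} of $|p'|$ together with an explicit choice of spherical coordinates on $\bbs^2$ adapted to the vector $p+q$, and then interpolate between the trivial $L^\infty$ bound and an integrable singularity. First I would note that the equality $\int_{\bbs^2}|p'|^{-1}d\omega = \int_{\bbs^2}|q'|^{-1}d\omega$ follows immediately from the substitution $\omega\mapsto-\omega$, which by \eqref{p'^0}--\eqref{q'^0} swaps $|p'|$ and $|q'|$ while preserving the measure $d\omega$. So it suffices to bound the first integral. Writing $u := p+q$ and $\mu := \cos\theta$ where $\theta$ is the angle between $\omega$ and $u$, we have from \eqref{p'^0} that
\[
|p'| = \frac{|p|+|q|}{2} + \frac{|u|}{2}\mu,
\]
and hence, integrating out the trivial azimuthal angle,
\[
\int_{\bbs^2}\frac{1}{|p'|}d\omega = 2\pi\int_{-1}^{1}\frac{d\mu}{\frac{|p|+|q|}{2}+\frac{|u|}{2}\mu} = \frac{4\pi}{|u|}\log\frac{(|p|+|q|)+|u|}{(|p|+|q|)-|u|}.
\]

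The key quantitative input is the relation between $|u|^2 = |p+q|^2$ and the collisional invariant $\varrho$. Since $|p+q|^2 = |p|^2 + 2\,p\cdot q + |q|^2 = (|p|+|q|)^2 - \varrho^2$ by the definition \eqref{varrho}, we get
\[
(|p|+|q|)^2 - |u|^2 = \varrho^2, \qquad (|p|+|q|) - |u| = \frac{\varrho^2}{(|p|+|q|)+|u|} \geq \frac{\varrho^2}{2(|p|+|q|)}.
\]
Plugging this into the logarithm and using $(|p|+|q|)+|u|\leq 2(|p|+|q|)$ in the numerator,
\[
\int_{\bbs^2}\frac{1}{|p'|}d\omega \leq \frac{4\pi}{|u|}\log\frac{4(|p|+|q|)^2}{\varrho^2}.
\]
Now I would dispose of the factor $1/|u|$: either $|u|\geq\frac12(|p|+|q|)$, in which case $1/|u|\leq 2/(|p|+|q|)$ and we are left with $\frac{C}{|p|+|q|}\log\frac{4(|p|+|q|)^2}{\varrho^2}$; or $|u|<\frac12(|p|+|q|)$, in which case $\varrho^2 = (|p|+|q|)^2-|u|^2 \geq \frac34(|p|+|q|)^2$, so $\varrho$ is comparable to $|p|+|q|$, the logarithm is $O(1)$, and a direct bound $\int_{\bbs^2}|p'|^{-1}d\omega \le C/(|p|+|q|)$ (from $|p'|\ge\frac14(|p|+|q|)$ on a set of full measure after the same computation, or simply by Lemma \ref{basic}) finishes that case. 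In either case we obtain
\[
\int_{\bbs^2}\frac{1}{|p'|}d\omega \leq \frac{C}{|p|+|q|}\left(1+\log\frac{(|p|+|q|)^2}{\varrho^2}\right).
\]
The final step is the elementary inequality $\log(1/x)\leq C_\delta\, x^{-\delta}$ valid for all $x\in(0,1]$ and any $\delta>0$: applying it with $x = \varrho^2/(2(|p|+|q|))^2 \in (0,1]$ (recall $\varrho\leq 2\sqrt{|p||q|}\leq |p|+|q|$ by Lemma \ref{basic}) gives $1+\log\frac{(|p|+|q|)^2}{\varrho^2} \leq C_\delta (|p|+|q|)^\delta/\varrho^\delta$, and substituting yields exactly the claimed bound $\dfrac{C}{\varrho^\delta(|p|+|q|)^{1-\delta}}$.

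The only mildly delicate point is the case distinction on $|u|$ — one must make sure the $1/|u|$ singularity (which occurs when $p$ and $q$ are nearly antiparallel, so $|p+q|$ is small) is harmless, and indeed it is precisely in that regime that $\varrho$ is bounded below by a multiple of $|p|+|q|$, so the would-be singularity is cancelled. Everything else is the exact computation of a one-dimensional integral and the trivial convexity estimate $\log(1/x)\lesssim_\delta x^{-\delta}$, so I expect no real obstacle; the proof in the paper is likely to follow essentially this line, perhaps packaging the two cases into a single chain of inequalities.
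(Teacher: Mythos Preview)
Your proof is correct and follows the same route as the paper: both compute the integral explicitly via the parametrization \eqref{p'^0}, obtaining $\frac{4\pi}{|n|}\ln\frac{\nu+|n|}{\nu-|n|}$ with $\nu=|p|+|q|$ and $n=p+q$, and then exploit the identity $\varrho^2=\nu^2-|n|^2$. The only difference is in the final packaging: instead of your case split on $|n|$ together with the bound $\log(1/x)\le C_\delta x^{-\delta}$, the paper rewrites the exact value as $\frac{8\pi}{\varrho^\delta\nu^{1-\delta}}$ times a function of the single variable $|n|/\varrho$, namely $(1+x^2)^{(1-\delta)/2}\,x^{-1}\ln(x+\sqrt{1+x^2})$, and simply observes that this function is bounded for any $\delta>0$, thereby avoiding the case distinction.
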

\begin{proof}
For simplicity let us write 
\begin{align*}
\nu := |p| + |q|,\quad n:=p+q,
\end{align*}
so that we can write
\[
|p'| = \frac{\nu+n\cdot\omega}{2},\quad |q'| = \frac{\nu-n\cdot\omega}{2}.
\]
We use $\varrho^2 = \nu^2 -|n|^2$ to obtain the following:
\begin{align*}
\int_{\bbs^2}\frac{1}{|p'|}d\omega 
& = \int_{\bbs^2} \frac{2}{\nu+ n\cdot\omega} d\omega\\
& = 4\pi\int_0^\pi \frac{\sin\theta}{\nu+|n|\cos\theta} d\theta\allowdisplaybreaks\\
& = \frac{4\pi}{|n|}\ln \left(\frac{\nu+|n|}{\nu - |n|}\right)\\
& = \frac{8\pi}{|n|}\ln \left(\frac{\nu+|n|}{\varrho}\right)\allowdisplaybreaks\\
& = \frac{8\pi}{\varrho^\delta \nu^{1-\delta}}\left(1+\frac{|n|^2}{\varrho^2}\right)^{\frac{1-\delta}{2}}\frac{\ln\left(\frac{|n|}{\varrho} +\sqrt{1+\frac{|n|^2}{\varrho^2}}\right)}{\frac{|n|}{\varrho}}.
\end{align*}
Note that for any $\delta>0$ the following is bounded:
\[
(1+x^2)^{\frac{1-\delta}{2}}\frac{\ln(|x| + \sqrt{1+x^2})}{|x|}.
\]
Hence, we obtain the desired result:
\[
\int_{\bbs^2}\frac{1}{|p'|}d\omega \leq \frac{C}{\varrho^\delta \nu^{1-\delta}}.
\]
The calculation for $q'$ is the same, and this completes the proof. 
\end{proof}

\begin{lemma}\label{Pov2}
The post-collision momenta satisfy the following:
\[
\int_{\bbs^2}\frac{1}{|p'|^2}d\omega = \int_{\bbs^2}\frac{1}{|q'|^2}d\omega = \frac{16\pi}{\varrho^2}.
\]
\end{lemma}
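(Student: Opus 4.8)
The plan is to mimic the computation in the proof of Lemma~\ref{Pov1}, but now the angular integral is elementary and closes in a single step rather than producing a logarithm. As there, write $\nu := |p|+|q|$ and $n := p+q$, so that \eqref{p'^0}--\eqref{q'^0} give $|p'| = (\nu + n\cdot\omega)/2$ and $|q'| = (\nu - n\cdot\omega)/2$, and recall the identity $\varrho^2 = \nu^2 - |n|^2$ already established in that proof. I will assume throughout that $\varrho>0$ (equivalently $\nu>|n|$), i.e. that the three-vectors $p$ and $q$ are not parallel, which is exactly the regime in which the asserted integrals are finite.

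First I would reduce to a one-dimensional integral: choosing spherical coordinates on $\bbs^2$ with polar axis along $n$, so $n\cdot\omega = |n|\cos\theta$ and $d\omega = \sin\theta\,d\theta\,d\phi$, one gets
\[
\int_{\bbs^2}\frac{1}{|p'|^2}\,d\omega = 4\int_{\bbs^2}\frac{d\omega}{(\nu+n\cdot\omega)^2} = 8\pi\int_0^\pi \frac{\sin\theta}{(\nu+|n|\cos\theta)^2}\,d\theta = 8\pi\int_{-1}^{1}\frac{du}{(\nu+|n|u)^2},
\]
after the substitution $u=\cos\theta$. Then I would evaluate the last integral directly as $\frac{8\pi}{|n|}\bigl(\frac{1}{\nu-|n|}-\frac{1}{\nu+|n|}\bigr) = \frac{16\pi}{\nu^2-|n|^2}$, and substitute $\nu^2-|n|^2=\varrho^2$ to obtain $16\pi/\varrho^2$. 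For $q'$ the only change is $n\cdot\omega \mapsto -n\cdot\omega$ in the denominator, and since integration over $\bbs^2$ is invariant under $\omega\mapsto-\omega$ the two integrals coincide; this also reproves the equality of the two integrals asserted in the statement.

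I do not anticipate a genuine obstacle here: the whole argument is a short explicit quadrature. The only point requiring a word of care is the convergence/denominator issue, namely that $\nu>|n|$ so that $(\nu+|n|u)^{-2}$ is integrable on $[-1,1]$; this is guaranteed by $\varrho^2=\nu^2-|n|^2>0$, which in turn follows from Lemma~\ref{basic} (writing $\varrho^2 = 4|p||q|\sin^2(\varphi/2)$) whenever $p$ and $q$ are linearly independent. One could alternatively note that this lemma is simply the special case $\delta\to 0$ "sharpened" to an equality, but the self-contained computation above is cleanest.
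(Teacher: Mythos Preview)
Your argument is correct and is essentially identical to the paper's own proof: the paper also sets $\nu=|p|+|q|$, $n=p+q$, writes $\int_{\bbs^2}|p'|^{-2}d\omega = 8\pi\int_0^\pi \sin\theta\,(\nu+|n|\cos\theta)^{-2}\,d\theta$, and evaluates this directly to $16\pi/\varrho^2$ using $\varrho^2=\nu^2-|n|^2$. Your additional remarks on convergence and the $\omega\mapsto-\omega$ symmetry for the $q'$ integral are fine but not needed beyond what the paper records.
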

\begin{proof}
By a direct calculation we obtain
\begin{align*}
\int_{\bbs^2}\frac{1}{|p'|^2}d\omega & = \int_{\bbs^2}\frac{4}{(\nu + n\cdot\omega)^2}d\omega\\
& = 8\pi\int_0^\pi \frac{\sin\theta}{(\nu + |n|\cos\theta)^2} d\theta\\
& = \frac{16\pi}{\varrho^2},
\end{align*}
where $\nu$ and $n$ are  the same as in the proof of the previous lemma. The calculation for $q'$ is the same, and this completes the proof. 
\end{proof}

%
%

\section{Existence of solutions}
We prove the global existence of solutions to the massless Boltzmann equation. The strategy of proving the global existence is to follow the standard arguments, for instance see \cite{CIP,MW}, but the arguments will be successfully applied to the massless case. We first consider the following modified equation:
\begin{align}
\partial_t f = Q_k(f,f),\label{Bmod}
\end{align}
where $Q_k$ is the collision operator with cutoff defined as follows: 
\begin{gather}\label{Qmods}
Q_k(f,f) := R^{-3+b}\int_{\bbr^3}\int_{\bbs^2}\mathds{1}_{\{\varrho\geq k^{-1}\}}\frac{\varrho^{2-b}}{|p||q|}(f(p') f(q') - f(p) f(q)) d\omega dq,
\end{gather}
in the soft potential case, and
\begin{gather}\label{Qmodh}
Q_k(f,f) := R^{-3-a}\int_{\bbr^3}\int_{\bbs^2}\mathds{1}_{\{\varrho\leq k\}}\frac{\varrho^{2+a}}{|p||q|}(f(p') f(q') - f(p) f(q)) d\omega dq,
\end{gather}
in the hard potential case. Notice that the kernels are bounded in both cases by Lemma \ref{basic}. The quantities $R^{-3+b}$ and $R^{-3-a}$ are decreasing, so it is easy to follow the arguments of \cite{LNT} to obtain the global existence of solutions to the modified equation. 

Next, we need to consider weighted norms in order to remove the cutoffs. Let $L^1_r(\bbr^3)$ and $L^\infty_w(\bbr^3)$ denote the spaces of functions equipped with the following norms:
\begin{align}
\|f\|_{L^1_{r}}&:= \int_{\bbr^3} |f(p) ||p|^r dp,\label{norm1}\\
\|f\|_{L^\infty_w} &: = \sup_{p\in\bbr^3} |wf(p)|,\quad w :=|p| e^{|p|}.\label{norm2}
\end{align}
Note that $\|\cdot\|_{L^1_0}$ is the usual $L^1$-norm, in which case we will write $\|\cdot\|_{L^1}$ for simplicity. 

In the following we obtain the global existence and uniform boundedness of solutions to the modified equation. We study the soft potential case in Proposition \ref{props} and the hard potential case in Proposition \ref{proph}. 

%
%

\begin{prop}\label{props}
Let $k>0$ be given. For any initial data $0\leq f_0\in L^1(\bbr^3)$ the modified equation \eqref{Bmod}  with \eqref{Qmods} has a unique non-negative solution $f\in C^1([0,\infty);L^1(\bbr^3))$. If, in addition, $f_0\in L^1_{-2}(\bbr^3)\cap L^\infty_w(\bbr^3)$, then there exists $\varepsilon>0$ such that for any $\|f_0\|_{L^\infty_w}<\varepsilon$, the corresponding solution satisfies the following:
\begin{align*}
\sup_{0\leq t<\infty}\|f (t)\|_{L^\infty_w}&\leq C\varepsilon,\\
\sup_{0\leq t<\infty} \|f(t)\|_{L^1_{-2}} &\leq C,
\end{align*}
where the constants $C$ are independent of $k$.
\end{prop}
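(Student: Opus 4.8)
The plan is to establish the three assertions of Proposition \ref{props} in order: first the local-in-time existence and uniqueness for $L^1$ data, then the a priori bound on $\|f(t)\|_{L^\infty_w}$ under the smallness hypothesis, and finally the $L^1_{-2}$ bound, which will use the $L^\infty_w$ bound as input. The global $L^1$ existence for the modified equation \eqref{Bmod}--\eqref{Qmods} is essentially routine: by Lemma \ref{basic} the cutoff kernel $\mathds{1}_{\{\varrho\geq k^{-1}\}}\varrho^{2-b}/(|p||q|)$ is bounded, but one still has to handle the factors $1/|p|$ and $1/|q|$; here I would split the gain and loss terms, use the change of variables \eqref{dpdq} on the gain term to move the singular denominator onto $(p',q')$, and bound everything by $\|f\|_{L^1}^2$ times a constant (depending on $k$). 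Since $R^{-3+b}$ is decreasing and integrable in $t$ by \eqref{R}, a standard Picard iteration / monotone-iteration scheme as in \cite{LNT,CIP,MW} gives the unique non-negative $C^1([0,\infty);L^1)$ solution, with mass conservation from the collisional invariance giving the global-in-time control.

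The heart of the matter is the $L^\infty_w$ estimate. I would write the equation along characteristics (trivial here since there is no transport), decompose $\partial_t f = Q_k^+(f,f) - Q_k^-(f,f)$, multiply by the weight $w = |p|e^{|p|}$, and estimate $wQ_k^+$ and $wQ_k^-$ separately. For the loss term, $wQ_k^-(f,f)(p) = f(p)|p|e^{|p|}\cdot R^{-3+b}\int\int \varrho^{2-b}(|p||q|)^{-1} f(q)\, d\omega dq$, and using $\varrho^2\leq 4|p||q|$ from Lemma \ref{basic} one gets $\varrho^{2-b}\leq C(|p||q|)^{1-b/2}$, so the $q$-integral is controlled by $\int |q|^{-b/2} f(q)\,dq$, which is dominated by a combination of $\|f\|_{L^1_{-2}}$-type and $\|f\|_{L^\infty_w}$-type quantities (the weight $e^{-|q|}$ from $w^{-1}$ kills large $|q|$). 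For the gain term one bounds $f(p')f(q')\leq \|f\|_{L^\infty_w}^2 w(p')^{-1}w(q')^{-1}$, uses $w(p')^{-1}w(q')^{-1} = (|p'||q'|)^{-1}e^{-|p'|-|q'|} = (|p'||q'|)^{-1}e^{-|p|-|q|}$ by the energy conservation \eqref{energy}, and then performs the $\omega$-integral using Lemmas \ref{Pov1} and \ref{Pov2} to absorb $(|p'||q'|)^{-1}$ — this is exactly where the singular-weight strategy announced in the introduction pays off, since $\int_{\bbs^2}|p'|^{-1}|q'|^{-1}d\omega$ can be bounded via Cauchy–Schwarz and Lemma \ref{Pov2} by $C/\varrho^2$, and the surviving $\varrho^{2-b}/\varrho^2 = \varrho^{-b}$ with $0<b<1$ is integrable against $dq$ near $\varrho=0$ after another application of $\varrho^2\le 4|p||q|$. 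Collecting terms, I expect an inequality of the form $\frac{d}{dt}\|f(t)\|_{L^\infty_w}\leq C R^{-3+b}\,\|f(t)\|_{L^\infty_w}^2$ (possibly with a harmless extra factor of $\|f\|_{L^1_{-2}}$ that will be closed simultaneously), and since $\int_0^\infty R^{-3+b}\,dt<\infty$ by \eqref{R} for $b<1$, a continuity/bootstrap argument starting from $\|f_0\|_{L^\infty_w}<\varepsilon$ yields $\sup_t\|f(t)\|_{L^\infty_w}\leq C\varepsilon$ for $\varepsilon$ small, uniformly in $k$.

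For the $L^1_{-2}$ bound I would multiply the equation by $|p|^{-2}$ and integrate in $p$. The loss term is non-negative and can be discarded or, if needed for a lower-order gain, kept. For the gain term, apply the change of variables \eqref{dpdq}: $\int |p|^{-2} Q_k^+(f,f)\,dp$ becomes, after swapping $(p,q)\leftrightarrow(p',q')$, an integral of $|p'|^{-2}$ (now a function of the new primed variables relabeled) against $f(p)f(q)\varrho^{2-b}(|p||q|)^{-1}$, and the troublesome $|p'|^{-2}$ is integrated over $\omega$ using Lemma \ref{Pov2} to give exactly $16\pi/\varrho^2$; the remaining $\varrho^{-b}$ is again integrable. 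What is left is $\int\int f(p)f(q)\varrho^{-b}(|p||q|)^{-1}dp\,dq$ up to constants, and using $\varrho^{-b}\le C(|p||q|)^{-b/2}$ this is bounded by $\left(\int f(p)|p|^{-1-b/2}dp\right)^2$, which interpolates between $\|f\|_{L^1_{-2}}$ and $\|f\|_{L^1}$ (or is directly controlled by $\|f\|_{L^\infty_w}\|f\|_{L^1}$), giving $\frac{d}{dt}\|f(t)\|_{L^1_{-2}}\leq C R^{-3+b}\|f(t)\|_{L^\infty_w}\|f(t)\|_{L^1_{-2}} + (\text{lower order})$. Grönwall together with the already-established $\sup_t\|f\|_{L^\infty_w}\leq C\varepsilon$ and $\int_0^\infty R^{-3+b}dt<\infty$ then gives $\sup_t\|f(t)\|_{L^1_{-2}}\leq C$, uniform in $k$.

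The main obstacle I anticipate is the simultaneous closure of the $L^\infty_w$ and $L^1_{-2}$ estimates: the $L^\infty_w$ loss-term estimate naturally produces a factor like $\int |q|^{-b/2}f(q)dq$ which is not controlled by $\|f\|_{L^\infty_w}$ alone near $q=0$ (the weight $w^{-1}\sim|q|^{-1}$ is too singular there for $b<2$), so one genuinely needs the $L^1_{-2}$ (or an intermediate $L^1_{-b/2}$) bound as part of the bootstrap — hence the two estimates must be run together in a coupled continuity argument rather than sequentially. Keeping all constants independent of the cutoff parameter $k$ requires that every bound above uses only $\mathds{1}_{\{\varrho\ge k^{-1}\}}\le 1$ and never the cutoff itself, which the $\varrho$-integrability (from $0<b<1$) makes possible; this is the point where the restriction $b<1$, as opposed to $1<b<2$, is essential.
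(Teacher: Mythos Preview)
Your overall architecture is right, but there is a genuine gap in the $L^\infty_w$ gain-term estimate that prevents the argument from closing.

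You propose to bound $\int_{\bbs^2}|p'|^{-1}|q'|^{-1}\,d\omega$ by Cauchy--Schwarz together with Lemma~\ref{Pov2}, which gives at best $16\pi/\varrho^2$. After this, the gain term becomes
\[
wQ_k^+(f,f)(p)\ \leq\ CR^{-3+b}\|f\|_{L^\infty_w}^2\int_{\bbr^3}\frac{\varrho^{-b}}{|q|}e^{-|q|}\,dq,
\]
and since $\varrho^{-b}=(4|p||q|)^{-b/2}\sin^{-b}(\varphi/2)$, the $q$-integral evaluates to a finite constant times $|p|^{-b/2}$. This is \emph{not} bounded as $|p|\to 0$, so you cannot take the supremum over $p$ and obtain the closed inequality $\frac{d}{dt}\|f\|_{L^\infty_w}\leq CR^{-3+b}\|f\|_{L^\infty_w}^2$. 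No coupling with $\|f\|_{L^1_{-2}}$ repairs this, because the defect is a pointwise $p$-singularity, not an integral quantity. The paper avoids this by computing $\int_{\bbs^2}|p'|^{-1}|q'|^{-1}\,d\omega$ explicitly (it equals $\frac{16\pi}{\nu|n|}\ln\frac{\nu+|n|}{\varrho}$ with $\nu=|p|+|q|$, $n=p+q$) and bounding it by $C\varrho^{-\delta}\nu^{-(2-\delta)}$ for any $\delta>0$; the choice $\delta=2-b$ makes $\varrho^{2-b}$ cancel exactly, and the remaining $\nu^{-b}\leq |q|^{-b}$ depends only on $q$, giving the $p$-uniform bound.

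Two smaller points. First, the loss term needs no estimate at all: since $f\geq 0$ it contributes with a non-positive sign to $\partial_t(wf)$ and can simply be dropped, so your ``main obstacle'' paragraph is a red herring --- the $L^\infty_w$ bound closes on its own, and the $L^1_{-2}$ estimate is run afterwards using \eqref{finfty} as input, not simultaneously. Second, in your $L^1_{-2}$ sketch the inequality $\varrho^{-b}\leq C(|p||q|)^{-b/2}$ goes the wrong way (Lemma~\ref{basic} gives an \emph{upper} bound on $\varrho$); what actually works is the equality $\varrho^{-b}=(4|p||q|)^{-b/2}\sin^{-b}(\varphi/2)$, after which the angular factor is integrated out once one replaces $f(q)$ by its $L^\infty_w$ bound $C\varepsilon|q|^{-1}e^{-|q|}$.
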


\begin{proof}
Because of the cutoff the kernel is bounded. Hence, the existence in $L^1$ is obtained by following the same arguments as in \cite{LNT}, and we skip the proof. 

We suppose that $f_0\in L^1_{-2}(\bbr^3)\cap L^\infty_w(\bbr^3)$, and let $f\in C^1([0,\infty);L^1(\bbr^3))$ be the unique non-negative solution to the modified equation \eqref{Bmod} with \eqref{Qmods}. Multiplying the equation \eqref{Bmod} with \eqref{Qmods} by $|p|^r$, integrating it over $p$, and applying \eqref{dpdq}, we obtain the following:
\begin{align}
&\frac{d}{dt} \int_{\bbr^3} f(p)|p|^rdp\nonumber\\
&= \frac{R^{-3+b}}{2}\int_{\bbr^6}\int_{\bbs^2}\mathds{1}_{\{\varrho\geq k^{-1}\}}\frac{\varrho^{2-b}}{|p||q|}f(p)f(q)(|p'|^r + |q'|^r - |p|^r - |q|^r)d\omega dpdq.\label{Bint}
\end{align}
We immediately obtain for $r=0$,
\[
\frac{d}{dt} \int_{\bbr^3} f(p)dp =0.
\]
Since $f$ is non-negative, we obtain for all $t\geq 0$,
\begin{align}\label{f0}
\|f(t)\|_{L^1} = \|f_0\|_{L^1}.
\end{align}
In order to estimate the case $r=-2$, we first need to estimate the $L^\infty_w$ norm. Multiplying the equation \eqref{Bmod} with \eqref{Qmods} by $w$, we obtain
\begin{align}
\frac{\partial (wf)}{\partial t} & = w Q_k(f,f)\nonumber\\
& \leq R^{-3+b}\int_{\bbr^3}\int_{\bbs^2}\mathds{1}_{\{\varrho\geq k^{-1}\}}\frac{\varrho^{2-b}}{|p||q|} |p| e^{|p|} f(p') f(q') d\omega dq\allowdisplaybreaks\nonumber\\
&\leq R^{-3+b}\|f\|_{L^\infty_w}^2\int_{\bbr^3}\int_{\bbs^2} \frac{\varrho^{2-b}}{|q|} e^{|p|} \frac{1}{|p'|} e^{-|p'|} \frac{1}{|q'|} e^{-|q'|} d\omega dq\nonumber\\
& = R^{-3+b}\|f\|_{L^\infty_w}^2\int_{\bbr^3}\int_{\bbs^2} \frac{\varrho^{2-b}}{|q|} e^{-|q|}\frac{1}{|p'||q'|} d\omega dq,\label{westimate}
\end{align}
where we used \eqref{energy}. The integration on $\bbs^2$ is as follows:
\begin{align*}
\int_{\bbs^2}\frac{d\omega }{|p'||q'|}
= \int_{\bbs^2}\frac{4 d\omega }{\nu^2 - (n\cdot \omega)^2}
= \int_0^{\pi}\frac{8\pi \sin\theta d\theta }{\nu^2 - |n|^2\cos^2 \theta},
\end{align*}
where $\nu$ and $n$ are the same as in the proof of Lemma \ref{Pov1}. Then, we have
\begin{align*}
\int_0^{\pi}\frac{8\pi \sin\theta d\theta }{\nu^2 - |n|^2\cos^2 \theta}
& = \frac{8\pi}{\nu |n|}\ln\left(\frac{\nu + |n|}{\nu - |n|}\right)
 = \frac{16\pi}{\nu |n|}\ln\left(\frac{\nu + |n|}{\varrho}\right).
\end{align*}
The last quantity can be estimated as in the proof of Lemma \ref{Pov1}: for any $\delta>0$ we have
\[
\frac{16\pi}{\nu |n|}\ln\left(\frac{\nu + |n|}{\varrho}\right)
\leq \frac{C}{\varrho^\delta \nu^{2-\delta}}.
\]
Since $0< b<1$, we can choose $\delta = 2-b$ to obtain
\begin{align}\label{pinnu}
\int_{\bbs^2}\frac{d\omega }{|p'||q'|} \leq \frac{C}{\varrho^{2-b}\nu^b}\leq \frac{C}{\varrho^{2-b}|q|^b},
\end{align}
where we used the fact that $b> 0$ in the last inequality. Now, we have
\begin{align*}
\frac{\partial (wf)}{\partial t} &\leq C R^{-3+b}\|f\|^2_{L^\infty_w}\int_{\bbr^3} \frac{1}{|q|^{1+b}} e^{-|q|} dq\\
&\leq C R^{-3+b}\|f\|^2_{L^\infty_w},
\end{align*}
where the integral above is finite since $0< b<1$. Then, we obtain
\[
\frac{d}{d t}\|f\|_{L^\infty_w} \leq C R^{-3+b}\|f\|^2_{L^\infty_w}.
\]
Since $R^{-3+b}$ is integrable, we conclude that there exists $\varepsilon>0$ such that if $\|f_0\|_{L^\infty_w}\leq\varepsilon$, then
\begin{align}\label{finfty}
\sup_{0\leq t<\infty}\|f (t)\|_{L^\infty_w}\leq C\varepsilon.
\end{align}
We now estimate the expression \eqref{Bint} in the case $r=-2$ as follows:
\begin{align*}
&\frac{d}{dt} \int_{\bbr^3} f(p)\frac{1}{|p|^2}dp\\
&\leq \frac{R^{-3+b}}{2}\int_{\bbr^6}\int_{\bbs^2}\mathds{1}_{\{\varrho\geq k^{-1}\}}\frac{\varrho^{2-b}}{|p||q|}f(p)f(q)\left(\frac{1}{|p'|^2} + \frac{1}{|q'|^2}\right)d\omega dpdq\allowdisplaybreaks\\
&\leq CR^{-3+b} \int_{\bbr^6}\frac{\varrho^{-b}}{|p||q|}f(p)f(q) dpdq\\
&\leq CR^{-3+b} \int_{\bbr^6}\frac{f(p)f(q)}{|p|^{1+\frac{b}{2}}|q|^{1+\frac{b}{2}}\sin^b(\varphi/2)} dpdq,
\end{align*}
where we used Lemma \ref{Pov2} and Lemma \ref{basic}. Let us consider the integration over $q$ on the right hand side. We use \eqref{finfty} to obtain the following:
\begin{align*}
\int_{\bbr^3}\frac{f(q)}{|q|^{1+\frac{b}{2}}\sin^b(\varphi/2)}dq 
& \leq  C\varepsilon\int_{\bbr^3} \frac{e^{-|q|}}{|q|^{2+\frac{b}{2}}\sin^b(\varphi/2)}dq\nonumber\\
& \leq C\varepsilon\int_0^\infty \int_0^\pi \frac{e^{-|q|}\sin\varphi}{|q|^{\frac{b}{2}}\sin^b(\varphi/2)} d\varphi d|q|\nonumber\\
& \leq C\varepsilon\int_0^\infty  \frac{e^{-|q|}}{|q|^{\frac{b}{2}}} d|q|,
\end{align*}
where the last integral is finite, since $0<b<1$. Hence, we obtain
\begin{align*}
\frac{d}{dt} \int_{\bbr^3} f(p)\frac{1}{|p|^2}dp 
&\leq C\varepsilon R^{-3+b} \int_{\bbr^3}\frac{f(p)}{|p|^{1+\frac{b}{2}}} dp\\
&\leq C\varepsilon R^{-3+b} \left(\|f\|_{L^1} + \|f\|_{L^1_{-2}}\right).
\end{align*}
Since $R^{-3+b}$ is integrable, we obtain the desired result by Gr{\"o}nwall's inequality together with \eqref{f0}. 
\end{proof}

\begin{remark}\label{rmk1}
In \cite{LNT}, the authors studied the massless Boltzmann equation in a different situation, but it was also necessary to estimate the $L^1_{-2}$ norm. In Proposition \ref{props}, we could estimate the $L^1_{-2}$ norm by using the $L^\infty_w$ norm. On the other hand, in \cite{LNT}, the distribution function $f$ was assumed to be isotropic, i.e., $f(p) = f(|p|)$, so that the $L^1_{-2}$ norm could be estimated without the $L^\infty_w$ norm. One might expect that by using the $L^\infty_w$ norm it should be possible to extend the result of \cite{LNT} to the Bianchi case.  
\end{remark}

\begin{remark}\label{rmk2}
Note that the first result of Proposition \ref{props} shows that
\[
\sup_{0\leq t<\infty}\|f(t)\|_{L^1_r}\leq C\varepsilon,
\]
for any $-2< r \leq 0$. 
\end{remark}

%
%

\begin{prop}\label{proph}
Let $k>0$ be given. For any initial data $0\leq f_0\in L^1(\bbr^3)$ the modified equation \eqref{Bmod}  with \eqref{Qmodh} has a unique non-negative solution $f\in C^1([0,\infty);L^1(\bbr^3))$. If, in addition, $f_0\in L^\infty_w(\bbr^3)$, then there exists $\varepsilon>0$ such that for any $\|f_0\|_{L^\infty_w}<\varepsilon$, the corresponding solution satisfies the following:
\[
\sup_{0\leq t<\infty}\|f(t)\|_{L^\infty_w}\leq C\varepsilon,
\]
where the constant $C$ is independent of $k$.
\end{prop}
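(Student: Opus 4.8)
The plan is to run the argument of Proposition~\ref{props} essentially verbatim; the only genuinely new ingredient is the way the growing kernel factor $\varrho^{2+a}$ is absorbed against the decay of the weight $w=|p|e^{|p|}$, and this is where the hypothesis $a<2$ enters. The $L^1$ existence and uniqueness for \eqref{Bmod} with \eqref{Qmodh} follows exactly as in \cite{LNT}: by Lemma~\ref{basic} the cutoff kernel $\mathds{1}_{\{\varrho\le k\}}\varrho^{2+a}/(|p||q|)$ is bounded, and $R^{-3-a}$ is decreasing. The same local theory also propagates $f_0\in L^\infty_w$, so the whole content of the statement is the a priori bound $\sup_t\|f(t)\|_{L^\infty_w}\le C\varepsilon$ together with a continuation argument.

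For that bound I would multiply \eqref{Bmod}--\eqref{Qmodh} by $w$, discard the (nonnegative) loss term, and estimate the gain term exactly as in \eqref{westimate}: using $f(p')\le\|f\|_{L^\infty_w}/(|p'|e^{|p'|})$, $f(q')\le\|f\|_{L^\infty_w}/(|q'|e^{|q'|})$ and the energy conservation \eqref{energy} in the form $e^{|p|-|p'|-|q'|}=e^{-|q|}$, one obtains
\[
\frac{\partial(wf)}{\partial t}\ \le\ R^{-3-a}\,\|f\|_{L^\infty_w}^2\int_{\bbr^3}\frac{e^{-|q|}}{|q|}\,\varrho^{2+a}\Big(\int_{\bbs^2}\frac{d\omega}{|p'||q'|}\Big)\,dq .
\]
In Proposition~\ref{props} the factor $\varrho^{2-b}$ could be absorbed entirely into the angular integral; here one must instead retain the $(|p|+|q|)$-decay in the Povzner-type bound $\int_{\bbs^2}d\omega/(|p'||q'|)\le C_\delta\,\varrho^{-\delta}(|p|+|q|)^{\delta-2}$, valid for every $\delta>0$, which underlies Lemma~\ref{Pov1} and is already used in the proof of Proposition~\ref{props}. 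Taking $\delta=2-a\in(0,2]$ (this is where $a<2$ is needed) gives $\varrho^{2+a}\int_{\bbs^2}d\omega/(|p'||q'|)\le C\,\varrho^{2a}(|p|+|q|)^{-a}=C\big(\varrho^2/(|p|+|q|)\big)^{a}$, and since $\varrho^2\le 4|p||q|$ by Lemma~\ref{basic} and $|p|\le|p|+|q|$ one has $\varrho^2/(|p|+|q|)\le 4|q|$, hence
\[
\varrho^{2+a}\int_{\bbs^2}\frac{d\omega}{|p'||q'|}\ \le\ C\,|q|^{a}\qquad\text{uniformly in }p .
\]
(For $a=0$ one may instead simply use Cauchy--Schwarz together with Lemma~\ref{Pov2}.)

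Feeding this back in, $\partial_t(wf)\le C R^{-3-a}\|f\|_{L^\infty_w}^2\int_{\bbr^3}|q|^{a-1}e^{-|q|}\,dq\le C R^{-3-a}\|f\|_{L^\infty_w}^2$, the integral being finite for every $a\ge0$. Taking the supremum over $p$, integrating in $t$, and using that $R^{-3-a}=C(t+t_0)^{-(3+a)/2}$ is integrable on $[0,\infty)$ since the exponent exceeds $1$, the standard Gr{\"o}nwall/bootstrap argument for small data yields $\sup_{t\ge0}\|f(t)\|_{L^\infty_w}\le C\varepsilon$ whenever $\|f_0\|_{L^\infty_w}<\varepsilon$ is small enough; because only $\mathds{1}\le1$ and the $\bbs^2$/$\bbr^3$-integral constants entered, $C$ and $\varepsilon$ are independent of $k$, and this bound also rules out blow-up, so the solution is global.

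The step I expect to be the main obstacle is exactly the estimate of $\varrho^{2+a}\int_{\bbs^2}d\omega/(|p'||q'|)$: the crude bound $\int_{\bbs^2}d\omega/(|p'||q'|)\le 16\pi\varrho^{-2}$ leaves $\varrho^{a}\sim|p|^{a/2}|q|^{a/2}$, i.e.\ a power $|p|^{a/2}$ of the outer variable that the weight $w$ cannot absorb and that would break the $L^\infty_w$ estimate when $a>0$; keeping the $(|p|+|q|)$-gain from the Povzner estimate and trading it for $|q|^{a}$, at the cost of the restriction $a<2$, is what makes the argument close. Everything else is parallel to Proposition~\ref{props}.
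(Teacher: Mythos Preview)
Your proof is correct and follows the same overall route as the paper: multiply by $w$, drop the loss term, use energy conservation \eqref{energy} to convert $e^{|p|-|p'|-|q'|}$ into $e^{-|q|}$, invoke the Povzner-type bound $\int_{\bbs^2}d\omega/(|p'||q'|)\le C\varrho^{-\delta}\nu^{\delta-2}$, and close with the integrability of $R^{-3-a}$.

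The only difference is in how the factor $\varrho^{2+a}$ is absorbed. The paper picks $0<\delta<2-a$ (strictly), then applies Young's inequality to $\nu=|p|+|q|$ in the form $\nu\ge c\,|p|^{(2+a-\delta)/(2(2-\delta))}|q|^{(2-a-\delta)/(2(2-\delta))}$, so that the power of $|p|$ coming from $\nu^{-(2-\delta)}$ exactly cancels the one coming from $\varrho^{2+a-\delta}\le C|p|^{(2+a-\delta)/2}|q|^{(2+a-\delta)/2}$, leaving $|q|^a$. You instead take the endpoint $\delta=2-a$ and observe directly that $\varrho^{2a}\nu^{-a}=(\varrho^2/\nu)^a\le(4|q|)^a$ via $\varrho^2\le4|p||q|$ and $\nu\ge|p|$. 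Both routes land on the same integrand $|q|^{a-1}e^{-|q|}$; yours is a bit more transparent and avoids the auxiliary Young step, while the paper's choice keeps $\delta$ free, which is not actually needed here. Either way the restriction $a<2$ enters in exactly the place you identify.
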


\begin{proof}
As in the Proposition \ref{props} one can easily obtain the existence in $L^1$. We now assume that $f_0 \in L^\infty_w(\bbr^3)$. Multiplying the equation \eqref{Bmod} with \eqref{Qmodh} by $w$, we obtain
\begin{align*}
\frac{\partial (wf)}{\partial t} & = w Q_k(f,f)\\
& \leq R^{-3-a}\int_{\bbr^3}\int_{\bbs^2}\mathds{1}_{\{\varrho\leq k\}}\frac{\varrho^{2+a}}{|p||q|} |p| e^{|p|} f(p') f(q') d\omega dq\\
&\leq R^{-3-a}\|f\|_{L^\infty_w}^2\int_{\bbr^3}\int_{\bbs^2} \frac{\varrho^{2+a}}{|q|} e^{|p|} \frac{1}{|p'|} e^{-|p'|} \frac{1}{|q'|} e^{-|q'|} d\omega dq\\
& = R^{-3-a}\|f\|_{L^\infty_w}^2\int_{\bbr^3}\int_{\bbs^2} \frac{\varrho^{2+a}}{|q|} e^{-|q|}\frac{1}{|p'||q'|} d\omega dq,
\end{align*}
where we used \eqref{energy}. By the same calculation as in the Proposition \ref{props} we obtain for any $\delta>0$, 
\begin{align}\label{p'q'estimate}
\int_{\bbs^2}\frac{d\omega }{|p'||q'|}
\leq \frac{C}{\varrho^\delta \nu^{2-\delta}}.
\end{align}
Here, if we choose $\delta = 2+a$ as in the Proposition \ref{props}, then the estimate \eqref{p'q'estimate} shows that
\[
\int_{\bbs^2}\frac{d\omega }{|p'||q'|}
\leq \frac{C\nu^a}{\varrho^{2+a}},
\]
so that the dependence on $p$ remains in $\nu$ (see \eqref{pinnu} in the soft potential case). Instead, we choose $0<\delta<2$ such that
\[
a+\delta<2. 
\]
Then, we apply Young's inequality as follows:
\begin{align}\label{Young}
\nu  \geq c|p|^{\frac{2+a-\delta}{2(2-\delta)}}|q|^{\frac{2-a-\delta}{2(2-\delta)}},
\end{align}
where $c$ is a positive constant depending on $a$ and $\delta$. Hence, the $L^\infty_w$ norm can be estimated together with \eqref{p'q'estimate} and \eqref{Young} as follows:
\begin{align*}
\frac{\partial (wf)}{\partial t}
& \leq CR^{-3-a}\|f\|_{L^\infty_w}^2\int_{\bbr^3} \frac{\varrho^{2+a-\delta}}{|q|} e^{-|q|}\frac{1}{ |p|^{\frac{2+a-\delta}{2}}|q|^{\frac{2-a-\delta}{2}}} dq\\
&\leq CR^{-3-a}\|f\|_{L^\infty_w}^2\int_{\bbr^3}|q|^{a-1}e^{-|q|} dq\\
&\leq CR^{-3-a}\|f\|_{L^\infty_w}^2,
\end{align*}
where we used Lemma \ref{basic}, and the last integral is finite since $0\leq a<2$. Therefore, we conclude that there exists $\varepsilon>0$ such that if $\|f_0\|_{L^\infty_w}\leq\varepsilon$, then
\[
\sup_{0\leq t<\infty}\|f (t)\|_{L^\infty_w}\leq C\varepsilon,
\]
which completes the proof.
\end{proof}

\begin{remark}\label{rmk3}
Note that the above result shows that for any $r\geq 0$,
\[
\sup_{0\leq t<\infty}\|f(t)\|_{L^1_r}\leq C\varepsilon.
\]
On the other hand, in a similar way to the soft potential case we obtain
\begin{align*}
&\frac{d}{dt} \int_{\bbr^3} f(p)|p|^rdp\nonumber\\
&= \frac{R^{-3-a}}{2}\int_{\bbr^6}\int_{\bbs^2}\mathds{1}_{\{\varrho\leq k\}}\frac{\varrho^{2+a}}{|p||q|}f(p)f(q)(|p'|^r + |q'|^r - |p|^r - |q|^r)d\omega dpdq.
\end{align*}
Hence, we obtain $\|f(t)\|_{L^1_r} = \|f_0\|_{L^1_r}$ for all $t\geq 0$ in the cases $r=0,1$.
\end{remark}

%
%

\subsection{Proof of Theorem 1}
We are now ready to remove the cutoff. We first consider Theorem \ref{thms} for the soft potential case. Note that $f_0 \in L^\infty_w(\bbr^3)$ implies $f_0 \in L^1(\bbr^3)$. Hence, we can apply Proposition \ref{props} to obtain a sequence $\{f_k\}_{k=1}^\infty$, which are the solutions to the modified equation \eqref{Bmod} with \eqref{Qmods}:
\[
\partial_t f_k =  Q_k(f_k,f_k),\quad f_k(0) = f_0\geq 0.
\]
Below, we will show that the sequence $\{f_k\}_{k=1}^\infty$ converges in $L^1(\bbr^3)\cap L^1_{-1}(\bbr^3)$. For $m<n$, we have
\begin{align*}
\partial_t f_m -\partial_t f_n & = Q_m(f_m,f_m) - Q_m(f_n,f_n) + Q_m(f_n,f_n) - Q_n(f_n,f_n),
\end{align*}
where
\begin{align*}
&Q_m(f_m,f_m) - Q_m(f_n,f_n)\\
& = \frac{R^{-3+b}}{2}\int_{\bbr^3}\int_{\bbs^2} \mathds{1}_{\{\varrho\geq m^{-1}\}}\frac{\varrho^{2-b}}{|p||q|}\allowdisplaybreaks\\
&\qquad\times\Big\{ (f_m+f_n)(p')(f_m - f_n)(q')+(f_m +f_n)(q')(f_m - f_n)(p')\\
&\qquad\qquad - (f_m+f_n)(p)(f_m - f_n)(q) -(f_m +f_n)(q)(f_m - f_n)(p)\Big\} d\omega dq,
\end{align*}
and
\begin{align*}
&Q_m(f_n,f_n) - Q_n(f_n,f_n)\\
& = - R^{-3+b}\int_{\bbr^3}\int_{\bbs^2} \mathds{1}_{\{n^{-1}\leq\varrho\leq m^{-1}\}}\frac{\varrho^{2-b}}{|p||q|}(f_n(p')f_n(q')-f_n(p)f_n(q))d\omega dq.
\end{align*}
Multiplying the above equation by $\mbox{sgn}(f_m - f_n)(p)$ we obtain
\begin{align*}
&\partial_t|f_m - f_n|(p)\nonumber\\
&\leq \frac{R^{-3+b}}{2}\int_{\bbr^3}\int_{\bbs^2} \mathds{1}_{\{\varrho\geq m^{-1}\}}\frac{\varrho^{2-b}}{|p||q|}\allowdisplaybreaks\\
&\qquad\times \Big\{ (f_m+f_n)(p')|f_m - f_n|(q')+(f_m +f_n)(q')|f_m - f_n|(p')\\
&\qquad\qquad + (f_m+f_n)(p)|f_m - f_n|(q) -(f_m +f_n)(q)|f_m - f_n|(p)\Big\} d\omega dq\allowdisplaybreaks\\
&\quad + R^{-3+b}\int_{\bbr^3}\int_{\bbs^2} \mathds{1}_{\{n^{-1}\leq \varrho\leq m^{-1}\}}\frac{\varrho^{2-b}}{|p||q|}(f_n(p')f_n(q')+f_n(p)f_n(q))d\omega dq,
\end{align*}
where we used the fact that the solutions are non-negative. Then, multiplying the above by $|p|^r$ and integrating it over $\bbr^3$, we obtain the following:
\begin{align*}
&\frac{d}{dt}\|f_m - f_n\|_{L^1_r}\nonumber\\
& \leq \frac{R^{-3+b}}{2}\int_{\bbr^6}\int_{\bbs^2} \mathds{1}_{\{\varrho\geq m^{-1}\}} \frac{\varrho^{2-b}}{|p||q|}\allowdisplaybreaks\nonumber\\
&\qquad \times \Big\{ (f_m+f_n)(p')|f_m - f_n|(q')+(f_m +f_n)(q')|f_m - f_n|(p')\nonumber\\
& \qquad\qquad + (f_m+f_n)(p)|f_m - f_n|(q) -(f_m +f_n)(q)|f_m - f_n|(p) \Big\} |p|^r d\omega dq dp\allowdisplaybreaks\nonumber\\
&\quad + R^{-3+b}\int_{\bbr^6}\int_{\bbs^2} \mathds{1}_{\{n^{-1}\leq \varrho\leq m^{-1}\}}\frac{\varrho^{2-b}}{|p||q|}\Big\{f_n(p')f_n(q')+f_n(p)f_n(q)\Big\}|p|^rd\omega dqdp\allowdisplaybreaks\nonumber\\
& = \frac{R^{-3+b}}{2}\int_{\bbr^6}\int_{\bbs^2} \mathds{1}_{\{\varrho\geq m^{-1}\}} \frac{\varrho^{2-b}}{|p||q|}\nonumber\\
&\qquad\times(f_m+f_n)(p)|f_m - f_n|(q)\Big\{|p'|^r+|q'|^r +|p|^r - |q|^r\Big\} d\omega dqdp\allowdisplaybreaks\nonumber\\
&\quad + R^{-3+b}\int_{\bbr^6}\int_{\bbs^2} \mathds{1}_{\{n^{-1}\leq \varrho\leq m^{-1}\}}\frac{\varrho^{2-b}}{|p||q|}f_n(p)f_n(q)\Big\{|p'|^r+|p|^r\Big\}d\omega dqdp.
\end{align*}
For $r=0$ we have
\begin{align*}
&\frac{d}{dt}\|f_m - f_n\|_{L^1}\leq I_1 + I_2,
\end{align*}
where
\begin{align*}
I_1 & = R^{-3+b}\int_{\bbr^6}\int_{\bbs^2}  \frac{\varrho^{2-b}}{|p||q|}(f_m+f_n)(p)|f_m - f_n|(q) d\omega dqdp,\\
I_2 &= 2R^{-3+b}\int_{\bbr^6}\int_{\bbs^2} \mathds{1}_{\{\varrho\leq m^{-1}\}}\frac{\varrho^{2-b}}{|p||q|}f_n(p)f_n(q)d\omega dqdp.
\end{align*}
The integrals $I_1$ and $I_2$ are estimated as follows:
\begin{align}
I_1&\leq CR^{-3+b}\int_{\bbr^6} \frac{1}{|p|^{\frac{b}{2}}|q|^{\frac{b}{2}}} (f_m+f_n)(p)|f_m - f_n|(q)dq dp\nonumber\\
&\leq CR^{-3+b}\sup_k \|f_k\|_{L^1_{-b/2}}\|f_m - f_n\|_{L^1_{-b/2}},\label{sI_1}
\end{align}
and
\begin{align}
I_2&\leq CR^{-3+b}m^{-2+b}\int_{\bbr^6} \frac{1}{|p||q|}f_n(p) f_n(q) dq dp\nonumber\\
& \leq C R^{-3+b}m^{-2+b}\sup_k \|f_k\|^2_{L^1_{-1}}. \label{sI_2}
\end{align}
For $r=-1$, we have
\begin{align*}
\frac{d}{dt}\|f_m - f_n\|_{L^1_{-1}}\leq J_1 + J_2 + J_3 + J_4 + J_5,
\end{align*}
where
\begin{align*}
J_1 & = \frac{R^{-3+b}}{2}\int_{\bbr^6}\int_{\bbs^2}  \frac{\varrho^{2-b}}{|p||q|}(f_m+f_n)(p)|f_m - f_n|(q)\frac{1}{|p'|}d\omega dqdp,\\
J_2 & = \frac{R^{-3+b}}{2}\int_{\bbr^6}\int_{\bbs^2}  \frac{\varrho^{2-b}}{|p||q|}(f_m+f_n)(p)|f_m - f_n|(q)\frac{1}{|q'|}d\omega dqdp,\\
J_3 & = \frac{R^{-3+b}}{2}\int_{\bbr^6}\int_{\bbs^2}  \frac{\varrho^{2-b}}{|p||q|}(f_m+f_n)(p)|f_m - f_n|(q)\frac{1}{|p|}d\omega dqdp,\allowdisplaybreaks\\
J_4 & = R^{-3+b}\int_{\bbr^6}\int_{\bbs^2} \mathds{1}_{\{\varrho\leq m^{-1}\}}\frac{\varrho^{2-b}}{|p||q|}f_n(p)f_n(q)\frac{1}{|p'|}d\omega dqdp,\\
J_5 & = R^{-3+b}\int_{\bbr^6}\int_{\bbs^2} \mathds{1}_{\{\varrho\leq m^{-1}\}}\frac{\varrho^{2-b}}{|p||q|}f_n(p)f_n(q)\frac{1}{|p|}d\omega dqdp.
\end{align*}
We use Lemma \ref{Pov1} to estimate $J_1$ as follows:
\begin{align}
J_1 & = \frac{R^{-3+b}}{2}\int_{\bbr^6}\int_{\bbs^2}  \frac{\varrho^{2-b}}{|p||q|}(f_m+f_n)(p)|f_m - f_n|(q)\frac{1}{|p'|}d\omega dqdp\nonumber\\
& \leq CR^{-3+b}\int_{\bbr^6} \frac{\varrho^{1-b}}{|p||q|}(f_m+f_n)(p)|f_m - f_n|(q) dqdp\allowdisplaybreaks\nonumber\\
& \leq CR^{-3+b}\int_{\bbr^6} \frac{1}{|p|^{\frac{1+b}{2}}|q|^{\frac{1+b}{2}}}(f_m+f_n)(p)|f_m - f_n|(q) dqdp\nonumber\\
& \leq CR^{-3+b}\sup_k \|f_k\|_{L^1_{-(1+b)/2}}\|f_m - f_n\|_{L^1_{-(1+b)/2}}.\label{sJ_1}
\end{align}
The estimate of $J_2$ is exactly the same with that of $J_1$:
\begin{align}
J_2&\leq CR^{-3+b}\sup_k \|f_k\|_{L^1_{-(1+b)/2}}\|f_m - f_n\|_{L^1_{-(1+b)/2}}.\label{sJ_2}
\end{align}
The integral $J_3$ is estimated as follows:
\begin{align}
J_3 & = \frac{R^{-3+b}}{2}\int_{\bbr^6}\int_{\bbs^2}  \frac{\varrho^{2-b}}{|p||q|}(f_m+f_n)(p)|f_m - f_n|(q)\frac{1}{|p|}d\omega dqdp\nonumber\\
&\leq CR^{-3+b}\int_{\bbr^3} (f_m + f_n)(p)\frac{1}{|p|^{1+\frac{b}{2}}}dp \int_{\bbr^3} |f_m - f_n|(q)\frac{1}{|q|^{\frac{b}{2}}} dq\nonumber\\
&\leq C R^{-3+b}\sup_k \|f_k\|_{L^1_{-1-b/2}} \|f_m - f_n\|_{L^1_{-b/2}}.\label{sJ_3}
\end{align}
For the integral $J_4$, since $0< b<1$, we have the following:
\begin{align}
J_4 & = R^{-3+b}\int_{\bbr^6}\int_{\bbs^2} \mathds{1}_{\{\varrho\leq m^{-1}\}}\frac{\varrho^{2-b}}{|p||q|}f_n(p)f_n(q)\frac{1}{|p'|}d\omega dqdp\nonumber\\
&\leq CR^{-3+b}\int_{\bbr^6} \mathds{1}_{\{\varrho\leq m^{-1}\}}\frac{\varrho^{1-b}}{|p||q|}f_n(p)f_n(q) dqdp\nonumber\allowdisplaybreaks\\
&\leq C R^{-3+b} m^{-1+b}\int_{\bbr^6} \frac{1}{|p||q|}f_n(p)f_n(q) dqdp\nonumber\\
&\leq C R^{-3+b} m^{-1+b}\sup_k \|f_k\|^2_{L^1_{-1}}.\label{sJ_4}
\end{align}
Similarly, $J_5$ is estimate as follows:
\begin{align}
J_5 & = R^{-3+b}\int_{\bbr^6}\int_{\bbs^2} \mathds{1}_{\{\varrho\leq m^{-1}\}}\frac{\varrho^{2-b}}{|p||q|}f_n(p)f_n(q)\frac{1}{|p|}d\omega dqdp\nonumber\\
&\leq CR^{-3+b} m^{-2+b}\int_{\bbr^3} f_n(p)\frac{1}{|p|^2}dp \int_{\bbr^3} f_n(q)\frac{1}{|q|} dq\nonumber\\
&\leq CR^{-3+b}  m^{-2+b}\sup_k \|f_k\|_{L^1_{-2}}\sup_k \|f_k\|_{L^1_{-1}}.\label{sJ_5}
\end{align}
We now apply Proposition \ref{props} with Remark \ref{rmk2} to obtain from \eqref{sI_1}--\eqref{sJ_5} that
\begin{multline*}
\frac{d}{dt}\left(\|f_m - f_n\|_{L^1}+\|f_m - f_n\|_{L^1_{-1}}\right)\\
\leq C\varepsilon R^{-3+b}\left( m^{-1+b} + \|f_m - f_n\|_{L^1} + \|f_m - f_n\|_{L^1_{-1}}\right).
\end{multline*}
Since $R^{-3+b}$ is integrable and $f_m(0) = f_n(0) = f_0$, we obtain
\[
\|f_m - f_n\|_{L^1}+\|f_m - f_n\|_{L^1_{-1}} \leq C m^{-1+b},
\]
which shows that the sequence converges in $L^1(\bbr^3)\cap L^1_{-1}(\bbr^3)$ as $m\to\infty$. Hence, we obtain the existence part of Theorem \ref{thms}. To obtain the boundedness of $\|f\|_{L^\infty_w}$ we multiply the equation \eqref{Bsoft} by $w$ and follow the proof of Proposition \ref{props}. Note that the estimate \eqref{westimate} still holds for the original equation \eqref{Bsoft} without the cutoff. Therefore, we obtain again the estimate \eqref{finfty}, and this completes the proof of Theorem \ref{thms}.

%
%

\subsection{Proof of Theorem 2}
We now consider the hard potential case. The strategy is basically the same as in the soft potential case, but we consider the $L^1_1$ norm instead. By the same arguments we obtain for $m<n$:
\begin{align*}
&\frac{d}{dt}\|f_m - f_n\|_{L^1_r}\\
& \leq \frac{R^{-3-a}}{2}\int_{\bbr^6}\int_{\bbs^2} \mathds{1}_{\{\varrho\leq m\}} \frac{\varrho^{2+a}}{|p||q|}\allowdisplaybreaks\\
&\qquad\times(f_m+f_n)(p)|f_m - f_n|(q)\Big\{|p'|^r+|q'|^r +|p|^r - |q|^r\Big\} d\omega dqdp\\
&\quad + R^{-3-a}\int_{\bbr^6}\int_{\bbs^2} \mathds{1}_{\{m\leq \varrho\leq n\}}\frac{\varrho^{2+a}}{|p||q|}f_n(p)f_n(q)\Big\{|p'|^r+|p|^r\Big\}d\omega dqdp,
\end{align*}
where $f_m$ and $f_n$ are the solutions of \eqref{Bmod} with \eqref{Qmodh}. Then, for $r=0$ we have
\begin{align*}
&\frac{d}{dt}\|f_m - f_n\|_{L^1}\leq I_1 + I_2,
\end{align*}
where
\begin{align*}
I_1 & = R^{-3-a}\int_{\bbr^6}\int_{\bbs^2}  \frac{\varrho^{2+a}}{|p||q|}(f_m+f_n)(p)|f_m - f_n|(q) d\omega dqdp,\\
I_2 &= 2R^{-3-a}\int_{\bbr^6}\int_{\bbs^2} \mathds{1}_{\{\varrho\geq m\}}\frac{\varrho^{2+a}}{|p||q|}f_n(p)f_n(q)d\omega dqdp.
\end{align*}
The integrals $I_1$ and $I_2$ are estimated as follows:
\begin{align}
I_1&\leq CR^{-3-a}\int_{\bbr^6} |p|^{\frac{a}{2}}|q|^{\frac{a}{2}} (f_m+f_n)(p)|f_m - f_n|(q)dq dp\nonumber\\
&\leq CR^{-3-a} \sup_k \|f_k\|_{L^1_{a/2}}\|f_m - f_n\|_{L^1_{a/2}},\label{hI_1}
\end{align}
and since $0\leq a<2$,
\begin{align}
I_2 &\leq CR^{-3-a}m^{-2+a}\int_{\bbr^6}\int_{\bbs^2} \mathds{1}_{\{\varrho\geq m\}}\frac{\varrho^{4}}{|p||q|}f_n(p)f_n(q)d\omega dqdp\nonumber\\
& \leq C R^{-3-a}m^{-2+a}\sup_k \|f_k\|_{L^1_1}^2. \label{hI_2}
\end{align}
For $r=1$, we have
\begin{align*}
\frac{d}{dt}\|f_m - f_n\|_{L^1_{1}}\leq J_1 + J_2,
\end{align*}
where
\begin{align*}
J_1 & = R^{-3-a}\int_{\bbr^6}\int_{\bbs^2}  \frac{\varrho^{2+a}}{|p||q|}(f_m+f_n)(p)|f_m - f_n|(q)|p|d\omega dqdp,\\
J_2 & = R^{-3-a}\int_{\bbr^6}\int_{\bbs^2} \mathds{1}_{\{\varrho\geq m\}} \frac{\varrho^{2+a}}{|p||q|}f_n(p)f_n(q)\Big\{|p'|+|p|\Big\}d\omega dqdp,
\end{align*}
where we used \eqref{energy} for $J_1$. We estimate $J_1$ as follows:
\begin{align}
J_1 & \leq CR^{-3-a}\int_{\bbr^6} |p|^{1+\frac{a}{2}}(f_m+f_n)(p)|q|^{\frac{a}{2}}|f_m - f_n|(q) dqdp\allowdisplaybreaks\nonumber\\
& \leq CR^{-3-a} \sup_k \|f_k\|_{L^1_{1+a/2}} \|f_m - f_n\|_{L^1_{a/2}}.\label{hJ_1}
\end{align}
By \eqref{energy} and the symmetry, $J_2$ can be estimated as
\begin{align}
J_2 & \leq CR^{-3-a}\int_{\bbr^6}\int_{\bbs^2} \mathds{1}_{\{\varrho\geq m\}} \frac{\varrho^{2+a}}{|p||q|}f_n(p)f_n(q)|p|d\omega dqdp\nonumber\\
& \leq CR^{-3-a}m^{-2+a}\int_{\bbr^6} \frac{\varrho^{4}}{|p||q|}f_n(p)f_n(q)|p| dqdp\nonumber\\
& \leq CR^{-3-a}m^{-2+a}\sup_k\|f_k\|_{L^1_2}\sup_k \|f_k\|_{L^1_1}\label{hJ_2}.
\end{align}
Applying Proposition \ref{proph} and the estimates of Remark \ref{rmk3}, we obtain from \eqref{hI_1}--\eqref{hJ_2} that
\begin{multline*}
\frac{d}{dt}\left(\|f_m - f_n\|_{L^1}+\|f_m - f_n\|_{L^1_{1}}\right)\\
\leq C\varepsilon R^{-3-a}\left( m^{-2+a} + \|f_m - f_n\|_{L^1} + \|f_m - f_n\|_{L^1_{1}}\right).
\end{multline*}
Since $0\leq a<2$, one can prove the existence of solutions in $L^1(\bbr^3)\cap L^1_1(\bbr^3)$ as in the proof of Theorem \ref{thms}. The boundedness of $\|f\|_{L^\infty_w}$ can be obtained by the same calculation as in Proposition \ref{proph}. This completes the proof of Theorem \ref{thmh}.

\section*{Acknowledgements}
The author thanks Ernesto Nungesser and Paul Tod for interesting comments in connection with our previous work. This work was supported by the Basic Science Research Program through the National Research Foundation of Korea (NRF) funded by the Ministry of Science, ICT \& Future Planning (NRF-2018R1A1A1A05078275).

\bibliographystyle{abbrv}

\end{document}